\newcommand{\Bad}{\mathbf{Bad}}
\begin{document}

\title{$\Bad(s,t)$ is hyperplane absolute winning}

\authorerez\authordavid

\begin{abstract}
J. An \cite{An} proved that for any $s,t \geq 0$ such that $s + t = 1$, $\Bad(s,t)$ is $(34\sqrt 2)^{-1}$-winning for Schmidt's game. We show that using the main lemma from \cite{An} one can derive a stronger result, namely that $\Bad(s,t)$ is hyperplane absolute winning in the sense of \cite{BFKRW}. As a consequence one can deduce the full dimension of $\Bad(s,t)$ intersected with certain fractals.
\end{abstract}
\maketitle

\section{Statement of results}

Throughout this paper, fix $s,t\geq 0$ with $s + t = 1$. Let $\Bad(s,t)$ denote the set
\[
\Bad(s,t) = \{(x,y) \in\R^2: \inf_{q\in\N}\max(q^s\|q x\|,q^t\|q y\|) > 0\},
\]
where $\|\cdot\|$ is the distance to the nearest integer. Schmidt's conjecture, proven in \cite{BPV}, states that $\Bad\left(\frac 13,\frac 23\right)\cap \Bad\left(\frac 23,\frac 13\right)$ is nonempty. A stronger result was proven by J. An \cite[Theorem 1.1]{An}, who showed that $\Bad(s,t)$ is $(34\sqrt 2)^{-1}$-winning for Schmidt's game. In particular this implies (cf. \cite[Theorem 2]{Schmidt1} and \cite[Corollary 2 of Theorem 6]{Schmidt1}) that for any countable collection of pairs $(s_n,t_n)_{n = 1}^\infty$, the intersection $\bigcap_{n\in\N}\Bad(s_n,t_n)$ is nonempty and in fact has full Hausdorff dimension in $\R^2$.

The object of this note is to give a proof of the following strengthening of An's theorem:

\begin{theorem}
\label{maintheorem}
The set $\Bad(s,t)$ is hyperplane absolute winning in the sense of \cite{BFKRW}.
\end{theorem}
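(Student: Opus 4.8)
The plan is to play the hyperplane absolute game of \cite{BFKRW} directly in the plane, so that $d = 2$ and the only sets Alice may delete are $\varepsilon$-neighbourhoods $\mathcal{L}^{(\varepsilon)}$ of affine lines $\mathcal{L} \subset \R^2$. Fixing $\beta \in (0,1/3)$, I want a strategy for Alice guaranteeing that every point of the nested intersection $\bigcap_i B_i$ of Bob's balls lies in $\Bad(s,t)$. The first step is the standard reformulation: $(x,y) \in \Bad(s,t)$ if and only if there is a $c > 0$ such that $(x,y)$ lies outside every rectangle $R(p_1,p_2,q)$ centred at $(p_1/q,p_2/q)$ with half-widths $c q^{-1-s}$ and $c q^{-1-t}$, as $q$ ranges over $\N$ and $(p_1,p_2)$ over $\Z^2$. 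I would bundle these rectangles into generations indexed by geometric blocks $q \in [Q_k,Q_{k+1})$, with ratio $Q_{k+1}/Q_k$ a large constant chosen in terms of $\beta$, so that Alice can dispose of one generation per bounded group of turns.

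The geometric heart of the argument, and the point at which I would invoke the main lemma of \cite{An} as a black box, is a collinearity phenomenon for the centres of the dangerous rectangles. If three rationals with denominators of size $\sim Q_k$ lie in a common ball of radius $\rho$, the triangle they span has area either zero or at least of order $Q_k^{-3}$; hence once $\rho$ is small compared with $Q_k^{-3/2}$ all generation-$k$ centres meeting the ball are forced onto a single affine line $\mathcal{L}_k$. This is exactly the feature that the hyperplane absolute game is designed to exploit: since Schmidt's game proceeds through nested balls, An can only steer Bob away from the bad rectangles ball-by-ball and must iterate his lemma at a cost in the winning constant, whereas the hyperplane game lets Alice excise the \emph{entire} neighbourhood of $\mathcal{L}_k$ in a single legal move. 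The version of An's lemma I want is thus quantitative control of $\mathcal{L}_k$ together with the deviation of the centres from it, uniform over the ball.

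Alice's strategy is then to idle, deleting harmless far-away line-neighbourhoods, until Bob's radius $\rho_i$ reaches the threshold scale attached to generation $k$, and at that turn to delete $\mathcal{L}_k^{(\beta\rho_i)}$. Every later ball is nested in $B_i \setminus \mathcal{L}_k^{(\beta\rho_i)}$ and so stays at distance $\geq \beta\rho_i$ from $\mathcal{L}_k$ forever after, so this single move is meant to neutralise generation $k$ permanently; as every rectangle belongs to exactly one generation, the outcome would then avoid all of them and land in $\Bad(s,t)$. Because each generation costs only finitely many turns and the rule $\rho_{i+1} \geq \beta\rho_i$ forces the radius to shrink slowly enough to supply those turns, this is a legal strategy for every $\beta$, which is what HAW requires; the announced full-dimension statements on fractals then follow formally from the general theory of \cite{BFKRW}.

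The step I expect to be the genuine obstacle is reconciling the two scales just introduced. The rectangles are highly eccentric and axis-aligned rather than aligned with $\mathcal{L}_k$, so pushing Bob a distance $\beta\rho_i$ off the line removes a rectangle only when its extent transverse to $\mathcal{L}_k$ — which can be as large as a constant times $Q_k^{-1-\min(s,t)}$ — is smaller than $\beta\rho_i$, whereas the collinearity above needs $\rho_i$ small compared with $Q_k^{-3/2}$. Since $\min(s,t) \leq 1/2$, comparing exponents shows these two windows cannot overlap for a fixed $\beta$, which is precisely why the naive one-line-per-generation picture is too optimistic and why An's lemma is needed in full strength: the covering has to be organised across scales, exploiting how the dangerous centres and their rectangles nest, rather than settled at a single scale. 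Carrying out this multi-scale bookkeeping while keeping every deletion a legal hyperplane move of radius $\leq \beta\rho_i$ is where the real work of the proof lies.
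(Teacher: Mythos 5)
Your proposal sets up the right framework and, to your credit, correctly diagnoses its own fatal flaw: the eccentric rectangles $\Delta_\epsilon(P)$ cannot be removed by a single line-neighbourhood of radius $\beta\rho_i$ at the same scale at which the collinearity kicks in, so ``one deletion per generation, at the generation's threshold scale'' does not close. But the proposal then stops at exactly this point, declaring the multi-scale bookkeeping to be ``where the real work lies'' without doing it. That bookkeeping is the entire content of the proof, so as written this is a gap, not a proof.

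The idea you are missing is that one should not play the hyperplane absolute game directly. The paper first passes to the $(\beta,c)$-\emph{hyperplane potential game} of \cite[Appendix C]{FSU4}, in which Alice may delete a \emph{countable} family of line-neighbourhoods on a single turn, subject only to the summability constraint \eqref{alicerules}, and then invokes the equivalence of the two games (Lemma \ref{lemmaFSU}). This is precisely what resolves your scale mismatch. An's lemma (Lemma \ref{lemmaan}) is not the single-scale collinearity statement you describe: its partition $\PP_{n,k}^{(\delta)}$ carries \emph{two} indices, and for a ball $B$ at scale $R^{-m}$ it produces, for every $k\geq 1$ and $\delta\in\{1,2\}$, a line whose relevant thickening lives at the strictly finer scale $R^{-(m+k)}$ --- the rationals dangerous for $B$ are stratified across all finer scales, exactly the nesting you gesture at. In the potential game Alice deletes all of these neighbourhoods at once when Bob's radius first drops below $R^{-m}r_0/2$; since their radii $3R^{-(m+k)}r_0$ decay geometrically in $k$, the constraint $\sum_i r_{i}^c\leq(\beta r)^c$ holds once $R$ is large enough. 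In the absolute game such a countable simultaneous deletion is illegal, which is the wall you ran into; the conversion back to the absolute game is then handled once and for all by the general argument sketched in the proof of Lemma \ref{lemmaFSU} (Alice greedily deletes the neighbourhood carrying the most remaining ``potential''), not by anything specific to $\Bad(s,t)$. Without this device, or an equivalent one, your outline cannot be completed.
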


Theorem \ref{maintheorem} is a generalization of An's theorem since every hyperplane absolute winning set is $\alpha$-winning for Schmidt's game for every $0 < \alpha < 1/2$ \cite[Proposition 2.3(a)]{BFKRW}. Moreover, the intersection of a hyperplane absolute winning set with a hyperplane diffuse set which is the support of an Ahlfors regular measure (see \cite{BFKRW} for the definitions) has full dimension with respect to that set \cite[Theorems 4.7 and 5.3]{BFKRW}. In particular hyperplane absolute winning sets have full dimension intersection with many well-known fractals such as the Sierpinski triangle and the von Koch snowflake curve. Finally, the class of hyperplane absolute winning sets is closed under countable intersections \cite[Proposition 2.3(b)]{BFKRW}, and invariant under $\CC^1$ diffeomorphisms \cite[Proposition 2.3(c)]{BFKRW}. As a result we have the following:

\begin{corollary}
For any hyperplane diffuse set $K\subseteq\R^2$ which is the support of an Ahlfors regular measure, for any countable collection of pairs $(s_n,t_n)_{n = 1}^\infty$, and for any countable collection of $\CC^1$ diffeomorphisms $(f_n)_{n = 1}^\infty$ from $\R^2$ to itself, the intersection
\[
K\cap \bigcap_{n\in\N}f_n\left(\Bad(s_n,t_n)\right)
\]
has full dimension in $K$.
\end{corollary}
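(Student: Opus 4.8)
Since the hyperplane absolute winning (HAW) property is preserved under countable intersections \cite[Proposition 2.3(b)]{BFKRW} and under $\CC^1$ diffeomorphisms \cite[Proposition 2.3(c)]{BFKRW}, and since any HAW set meets a hyperplane diffuse set supporting an Ahlfors regular measure in a subset of full dimension \cite[Theorems 4.7 and 5.3]{BFKRW}, the Corollary is immediate once Theorem \ref{maintheorem} is known: each $f_n(\Bad(s_n,t_n))$ is HAW, hence so is $\bigcap_n f_n(\Bad(s_n,t_n))$, and intersecting this with $K$ leaves a set of full dimension in $K$. The real task is therefore Theorem \ref{maintheorem}, and the plan is to exhibit an explicit winning strategy for Alice in the hyperplane absolute game on $\R^2$ for an arbitrary parameter $\beta\in(0,1/3)$ and arbitrary first move $B_0$ of radius $\rho_0$.

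I would first recast membership in $\Bad(s,t)$ as an avoidance condition. For $c>0$ and $q\in\N$, the set of $(x,y)$ with $q^s\|qx\|<c$ and $q^t\|qy\|<c$ is a disjoint union of axis-parallel rectangles $\Delta_c(p_1/q,p_2/q)$ centered at the rationals $(p_1/q,p_2/q)$, of $x$-width about $c\,q^{-1-s}$ and $y$-width about $c\,q^{-1-t}$. Since $(x,y)\in\Bad(s,t)$ as soon as it avoids all of these rectangles for a single fixed $c>0$, it suffices for Alice to fix $c$ small in terms of $\beta$ and $\rho_0$ and then to force the outcome $\bigcap_i B_i$ into the complement of every rectangle $\Delta_c(p_1/q,p_2/q)$. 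Because a hyperplane in $\R^2$ is a line, Alice's legal moves delete strips, so the entire point is to see that the rectangles relevant at each stage can be swept away by a single strip of width at most $\beta\rho_i$.

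The strategy treats one range of denominators per move. I would partition the denominators into geometric blocks $q\in[R_i,\lambda R_i)$ and synchronize them with the game so that block $i$ is addressed when Bob's radius $\rho_i$ has shrunk to the scale matched to $R_i$, small enough that $c\,R_i^{-1-s}$ and $c\,R_i^{-1-t}$ lie below $\beta\rho_i$. The main lemma of \cite{An} supplies the geometric input: for this matched relation between $\rho$ and $R$, all rectangles $\Delta_c(p_1/q,p_2/q)$ with $q$ in the block that actually meet $B_i$ lie in the $\epsilon_i$-neighborhood $\mathcal{L}_i^{(\epsilon_i)}$ of a single affine line $\mathcal{L}_i$, with $\epsilon_i\le\beta\rho_i$. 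Alice deletes $\mathcal{L}_i^{(\epsilon_i)}$, so that $B_{i+1}\subseteq B_i\setminus\mathcal{L}_i^{(\epsilon_i)}$ avoids every dangerous rectangle of block $i$, and all later balls inherit this. Consequently $\bigcap_i B_i$ avoids every $\Delta_c(p_1/q,p_2/q)$ of large denominator, and after shrinking $c$ to kill the finitely many small-denominator rectangles meeting $B_0$, it avoids them all. Hence $\bigcap_i B_i\subseteq\Bad(s,t)$; since $\Bad(s,t)$ is Lebesgue null this intersection is automatically a single point, which is exactly what Alice needs.

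The crux, and the step I expect to be the real obstacle, is the upgrade from An's statement — which for Schmidt's game only requires the dangerous rectangles to be coverable by a \emph{ball} — to the assertion that they lie in a single \emph{line}-neighborhood of width at most $\beta\rho_i$. This collinearity is precisely what makes the hyperplane game (rather than the ball game) winnable, and the mechanism is the usual determinant bound: three non-collinear rationals with denominators of size about $R$ span a triangle of area at least a constant times $R^{-3}$, whereas rationals whose rectangles meet a common ball of radius $\rho_i$ span a triangle of area at most a constant times $\rho_i^2$, so for the matched relation between $\rho_i$ and $R_i$ all the relevant centers are forced onto one rational line. The work is thus to read off from the main lemma of \cite{An} that it delivers exactly this collinear covering, to track the constants so that the covering strip genuinely has width $\le\beta\rho_i$, and to confirm that the scale synchronization already built into An's argument reaches every sufficiently large denominator at some finite stage no matter how Bob shrinks his balls. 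Once that bookkeeping is in place the strategy wins for every $\beta\in(0,1/3)$, which is the definition of HAW.
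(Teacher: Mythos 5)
Your first paragraph disposes of the Corollary exactly as the paper does: the full-dimension property of hyperplane absolute winning sets intersected with hyperplane diffuse supports of Ahlfors regular measures, closure under countable intersections, and invariance under $\CC^1$ diffeomorphisms, all quoted from \cite{BFKRW}, reduce everything to Theorem \ref{maintheorem}. That part is correct and is the entire content of the paper's argument for this statement.

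The sketch you then give of Theorem \ref{maintheorem} itself has a genuine gap at exactly the step you flag as the crux. You assert that, at the stage matched to a denominator block, \emph{all} dangerous rectangles of that block meeting Bob's current ball lie in a single strip of width at most $\beta\rho_i$, so that one deletion per turn suffices and the plain hyperplane absolute game can be played directly. An's lemma does not deliver this. What it gives (Lemma \ref{lemmaan}) is a partition of the dangerous rationals at scale $n$ into $2n$ classes $\PP_{n,k}^{(\delta)}$, $1\leq k\leq n$, $\delta\in\{1,2\}$, and for \emph{each} class a separate line, whose defining ball lives at the earlier scale $n-k$ and whose covering strip has width comparable to $R^{-n}\ell$. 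The eccentricity of the rectangles (widths $q^{-1-s}$ versus $q^{-1-t}$) is what forces this multiplicity; the determinant/area heuristic you invoke is the argument for the unweighted case and does not produce a single line here. Since Bob can arrange that Alice has only $O(1)$ turns between consecutive scales, she cannot perform the $2n$ deletions that scale $n$ requires within the absolute game, even allowing deferral. This is precisely why the paper routes the proof through the $(\beta,c)$-hyperplane \emph{potential} game of \cite[Appendix C]{FSU4}: at the trigger for scale $m$ Alice deletes the countable family $L_{k,\delta}^{(3R^{-(m+k)}r_0)}$, $k\geq 1$, $\delta\in\{1,2\}$, which is legal because the widths decay geometrically in $k$ and hence meet the budget \eqref{alicerules}, and Lemma \ref{lemmaFSU} then converts this into an absolute-game win. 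A secondary slip: shrinking $c$ (the paper's $\epsilon$) does not ``kill'' the finitely many low-denominator rectangles meeting $B_0$, since each still contains its rational center, which does not belong to $\Bad(s,t)$; these points must also be actively avoided, which the paper's partition of all of $\Q^2$ and the inductive Claim \ref{claiminductive} accomplish.
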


We remark also that while the strategy for Schmidt's game given in An's paper is not explicit, depending on K\doubleacute onig's lemma (cf. \cite[Proposition 2.2]{An}), the strategy which we give in the proof of Theorem \ref{maintheorem} can be described more or less explicitly; see Remark \ref{remarkexplicitstrategy}.

\begin{remark}
The cases $s = 0$ and $t = 0$ of Theorem \ref{maintheorem} are trivial consequences of the fact that the set of badly approximable numbers is absolute winning (see \cite[Theorem 1.3]{McMullen_absolute_winning} or \cite[Theorem 2.5]{BFKRW}) and will be omitted. Throughout the proof we assume that $s,t > 0$.
\end{remark}

\begin{remark}
Although the higher-dimensional analogue of Schmidt's conjecture has been established by V. V. Beresnevich \cite{Beresnevich}, it is still not known, for example, whether $\Bad(s,t,u)$ is winning for all $s,t,u\geq 0$ with $s + t + u = 1$, where $\Bad(s,t,u)$ is defined appropriately.
\end{remark}

\textbf{Acknowledgements: }The ideas for this paper were developed during the special session on Diophantine approximation on manifolds and fractals at the AMS sectional meeting in the University of Colorado at Boulder, which took place in April 2013. The authors would like to thank the organizers for this very stimulating occasion. The first-named author would like to thank Barak Weiss for many inspiring and encouraging discussions about the ideas in this paper. Part of this work was supported by ERC starter grant DLGAPS 279893 and BSF grant 2010428.

\section{Preliminaries}

The proof of Theorem \ref{maintheorem} will consist of combining the main idea of \cite{An} with the main idea of \cite[Appendix C]{FSU4}. We therefore begin by recalling these ideas.

\subsection{The main lemma of \cite{An}}
For each $P=\left(\frac{p}{q},\frac{r}{q}\right)\in\Q^2$ and $\epsilon > 0$, following \cite{An} we let\footnote{We remark that the $c$ in \cite{An} corresponds to our $\epsilon$; our $c$ corresponds to the $c$ in \cite[Appendix C]{FSU4}.\label{F1}}
\[
\Delta_\epsilon(P) = \left\{(x,y)\in\R^2: \left|x - \frac pq\right| \leq \frac{\epsilon}{q^{1 + s}} \text{ and } \left|y - \frac rq\right| \leq \frac{\epsilon}{q^{1 + t}} \right\},
\]
so that
\begin{equation}
\label{Badstformula}
\Bad(s,t) = \bigcup_{\epsilon > 0} \left(\R^2\setminus\bigcup_{P\in\mathbb{Q^2}}\Delta_\epsilon(P)\right).
\end{equation}
Let $\scrL$ denote the collection of lines (hyperplanes) in $\R^2$. If $L\in\scrL$ and $\gamma > 0$, we let $L^{(\gamma)}$ denote the \emph{$\gamma$-thickening} of $L$, i.e. the set $\{(x,y)\in\R^2: \dist((x,y),L)\leq\gamma\}$.

\begin{lemma}[{\cite[Lemma 4.2]{An}}]
\label{lemmaan}
Fix $R > 1$ and $\ell > 0$. There exists $\epsilon > 0$ and a partition
\begin{equation}
\label{partition}
\Q^2=\bigcup_{\delta = 1}^2 \bigcup_{n = 1}^\infty \bigcup_{k = 1}^n \PP_{n,k}^{(\delta)}
\end{equation}
such that the following holds: For each $m\geq 0$, let
\[
\PP_m = \bigcup_{\delta = 1}^2\bigcup_{k = 1}^m\PP_{m,k}^{(\delta)},
\]
and let $\BB_m$ denote the collection of balls $B\subset\R^2$ of radius $R^{-m}\ell/2$ satisfying
\begin{equation}
\label{Bhypothesis}
\forall m' \leq m \all P\in\PP_{m'}, \;\;\Delta_\epsilon(P)\cap B = \emptyset.
\end{equation}
Then for all $n\geq k\geq 1$, for all $\delta\in\{1,2\}$, and for all $B\in\BB_{n - k}$, there is a line $L = L_{n,k,\delta}(B)\in\scrL$ such that
\begin{equation}
\label{an}
\Delta_\epsilon(P) \subset L^{\left(\frac 13\ell R^{-n}\right)} \all P\in\PP_{n,k}^{(\delta)} \text{ such that } \Delta_\epsilon(P)\cap B\neq\emptyset.
\end{equation}
\end{lemma}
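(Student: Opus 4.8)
The plan is to prove Lemma \ref{lemmaan} through the geometry of numbers: I would realize the condition ``$\Delta_\epsilon(P)$ meets a given ball'' as a lattice-point condition on $\mathbb{Z}^3$ and apply Minkowski's second theorem to force the relevant rationals onto a single line. First I would homogenize, writing each $P = (p/q,r/q)\in\Q^2$ in lowest terms as the primitive vector $\mathbf v_P = (q,p,r)\in\mathbb{Z}^3$. Fix a ball $B$ of radius $\rho = R^{-(n-k)}\ell/2$ centered at $(x_0,y_0)$, and abbreviate the half-dimensions of $\Delta_\epsilon(P)$ by $a=a(q)=\epsilon q^{-(1+s)}$ and $b=b(q)=\epsilon q^{-(1+t)}$. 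The condition $\Delta_\epsilon(P)\cap B\neq\emptyset$ then reads $|p-qx_0|\leq q(\rho+a)$ and $|r-qy_0|\leq q(\rho+b)$, so for denominators of a fixed order $Q$ the admissible vectors $\mathbf v_P$ all lie (after symmetrizing in the $u$-variable) in the parallelepiped
\[
C = \{(u,v,w)\in\R^3 : |u|\leq 2Q,\ |v-ux_0|\leq 3Q(\rho+a(Q)),\ |w-uy_0|\leq 3Q(\rho+b(Q))\}.
\]
Since the shear $(u,v,w)\mapsto(u,\,v-ux_0,\,w-uy_0)$ is unimodular, $\mathrm{vol}(C)$ is comparable to $Q^3(\rho+a(Q))(\rho+b(Q))$, and using $a(Q)b(Q)=\epsilon^2Q^{-3}$ one checks that every term of the expansion is below a constant once $\rho$ is small relative to $Q^{-(1+t)}$.

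The heart of the argument is a successive-minima estimate. Let $\lambda_1\leq\lambda_2\leq\lambda_3$ be the successive minima of $C$ with respect to $\mathbb{Z}^3$. Minkowski's second theorem makes $\lambda_1\lambda_2\lambda_3$ comparable to $1/\mathrm{vol}(C)$, so once $\mathrm{vol}(C)$ drops below an absolute constant we obtain $\lambda_3\geq(\lambda_1\lambda_2\lambda_3)^{1/3}>1$. Hence $C$ contains no three linearly independent integer vectors, and every $\mathbf v_P$ with $\Delta_\epsilon(P)\cap B\neq\emptyset$ lies in a single rational plane $\alpha u+\beta v+\gamma w=0$. Dividing by $q$, all the corresponding rational points lie on one line $L=\{\beta X+\gamma Y+\alpha=0\}\in\scrL$. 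Because these rectangles are collinear and thin, the tube $L^{(\tau)}$ contains each such $\Delta_\epsilon(P)$ with $\tau=(a|\beta|+b|\gamma|)/\sqrt{\beta^2+\gamma^2}$, the largest distance from $L$ to a corner of a rectangle centered on $L$.

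It then remains to engineer the partition so that $\tau\leq\frac13\ell R^{-n}$. I would assign a rational $P$ of denominator $q$ to the index $n$ for which the short half-dimension $b(q)=\epsilon q^{-(1+t)}$ sits at scale $R^{-n}$; to the index $\delta\in\{1,2\}$ recording the regime of the normal direction $(\beta,\gamma)$ of the forced line, so that within each regime $\tau$ is governed by a definite one of the terms $a|\beta|$, $b|\gamma|$; and to the index $k$ recording the height of that line, which controls how much coarser than $R^{-n}$ the ball scale $R^{-(n-k)}$ may be while \eqref{an} still holds. The hypothesis $B\in\BB_{n-k}$ enters precisely here: it guarantees that no rational from a class $\PP_{m'}$ with $m'\leq n-k$ has a rectangle meeting $B$, so every lattice point of $C$ genuinely comes from the denominator range of the current class and the volume bound applies uniformly, with no small-denominator (low-height) vectors contaminating the successive-minima count.

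The main obstacle will be this last calibration. I must choose the $R$-adic denominator ranges, the binary regime $\delta$, and the height index $k$ so that simultaneously (i) $\mathrm{vol}(C)$ stays below the Minkowski threshold for every $B\in\BB_{n-k}$, forcing a single line; (ii) the anisotropy, with $a$ much larger than $b$, is balanced against the slope and height of that line so that $\tau\leq\frac13\ell R^{-n}$; and (iii) the families $\PP_{n,k}^{(\delta)}$ genuinely partition $\Q^2$ with $1\leq k\leq n$, each rational appearing once. Controlling the \emph{direction and height} of the forced line $L$, rather than merely its existence, is the delicate point, since a steep line would let the long side $a$ of the rectangles protrude well beyond the target thickness; taking $\epsilon$ small in terms of $R$ and $\ell$ is what absorbs the implied constants and closes the estimate.
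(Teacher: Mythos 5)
First, a point of comparison: the paper itself contains no proof of Lemma \ref{lemmaan}. It is imported from An's paper (his Lemma 4.2), with $\epsilon$ given by An's equation (3.2) and the partition defined as on pp.~5--6 of that paper; the paper's only contribution is the remark that An's proof goes through when the squares $\Phi(\tau)$, $\tau\in\SS_{n-k}$, are replaced by arbitrary balls $B\in\BB_{n-k}$. An's argument is a determinant (triangle-area) estimate --- three non-collinear rationals span a triangle of area at least $\frac{1}{2q_1q_2q_3}$ --- combined with an induction in which the hypothesis \eqref{Bhypothesis} is used geometrically to exclude bad configurations. So your proposal has to stand on its own, and it has a gap precisely at the point you label (i).

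The gap is that the Minkowski threshold cannot be met in the cases that matter, and membership in $\BB_{n-k}$ cannot help: $\mathrm{vol}(C)$ depends only on the radius $\rho=R^{-(n-k)}\ell/2$ of $B$, and \eqref{Bhypothesis} is an avoidance condition, not a smallness condition, so it can never ``keep $\mathrm{vol}(C)$ below the threshold.'' Your own sufficient condition --- $\rho$ at most of the order of the short half-dimension $b(Q)=\epsilon Q^{-(1+t)}$ --- is never satisfied in the lemma: for \eqref{an} to hold non-vacuously the short side of a class-$(n,k,\delta)$ rectangle must be at most about $\ell R^{-n}$, while balls of $\BB_{n-k}$ have radius larger than that by a factor of at least $R^{k}/2$. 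Writing $Q^{1+t}\asymp\epsilon/(\ell R^{-n})$, one computes $\mathrm{vol}(C)\asymp\epsilon^2R^{2k}$, so Minkowski's theorem applies only for $k$ up to roughly $\log_R(1/\epsilon)$; but any partition of the required shape contains classes with $k$ unbounded --- indeed under your own calibration ($n$ tied to the scale of $b(q)$, $k$ absorbing the anisotropy) one has $k\asymp n\,|s-t|/(1+\max(s,t))$, a positive proportion of $n$ whenever $s\neq t$. In that regime $\rho$ is comparable to the long side $a$, so $\mathrm{vol}(C)\geq Q^3a^2=\epsilon^2Q^{1-2\min(s,t)}\to\infty$, and the failure is not an artifact of the method: a ball of radius $\asymp a$ typically contains $\asymp\rho^2Q^3\gg1$ rationals of denominator $\asymp Q$ in general position, all of whose rectangles meet it. Hence no volume (or determinant) argument alone can force collinearity there; only \eqref{Bhypothesis} rules such configurations out, and it must be used the way An uses it --- a spread-out configuration of class-$(n,k,\delta)$ rationals meeting $B$ generates a rational of an earlier class $\PP_{m'}$, $m'\leq n-k$, whose rectangle meets $B$, contradicting $B\in\BB_{n-k}$ --- not merely to ``decontaminate'' the lattice-point count as in your sketch. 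There is a second, independent gap: collinearity concerns only the centers, while \eqref{an} requires the rectangles to lie in a tube of width $\frac13\ell R^{-n}$, which forces the line to be nearly parallel to their long axes. Two rationals with equal $x$-coordinate (denominators $q$ and $2q$, say) whose wide flat rectangles meet a common ball are trivially collinear but fit in no such tube; excluding this again requires \eqref{Bhypothesis}, and your proposal, which correctly flags the direction control as ``the delicate point,'' supplies no mechanism for it.
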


\begin{remark}
The relation between Lemma \ref{lemmaan} and \cite{An} requires some explanation. First of all, given $R > 1$ and $\ell > 0$, one can let $\epsilon > 0$ be defined by the equation \cite[(3.2)]{An}. Next, one can define the partition \eqref{partition} as in \cite[p. 5-6]{An}. At this point \cite[Lemma 4.2]{An} can almost be read as stated, except that An has fixed $\tau\in\SS_{n - k}$ instead of $B\in\BB_{n - k}$, and has considered the set $\PP_{n,k}^{(\delta)}(\tau) = \{P\in\PP_{n,k}^{(\delta)}:\Phi(\tau)\cap\Delta_\epsilon(P)\neq\emptyset\}$ in place of the set $\{P\in\PP_{n,k}^{(\delta)} : \Delta_\epsilon(P)\cap B\neq\emptyset\}$. But we observe that for $\tau\in\TT_{n - k}$, we have $\tau\in\SS_{n - k}$ if and only if $\Phi(\tau)\in\BB_{n - k}$.\footnote{Here we ignore the distinction between balls and squares. The difference is important only in calculating diameter; the diameter of a square with respect to the max norm is equal to its side length, while the diameter of a ball is equal to twice its radius. This is why we require balls in $\BB_m$ to have radius $R^{-m}\ell/2$, while if $\tau\in\SS_m$, the side length of $\Phi(\tau)$ is $R^{-m}\ell$.} Moreover, the proof of \cite[Lemma 4.2]{An} works equally well if $\Phi(\tau)$ is replaced by an arbitrary element in $\BB_{n - k}$. Thus the lemma holds just as well if $\Phi(\tau)$ denotes an arbitrary element of $\BB_{n - k}$ rather than an arbitrary element of $\Phi(\SS_{n - k})$.
\end{remark}

\subsection{Two variants of Schmidt's game}
We proceed to describe two variants of Schmidt's game, one introduced in \cite{BFKRW} and the other introduced in \cite[Appendix C]{FSU4}. In this paper we will not deal directly with the first game, but we will prove that $\Bad(s,t)$ is winning with respect to the second game. Since the two games are equivalent (Lemma \ref{lemmaFSU} below), this proves that $\Bad(s,t)$ is also winning with respect to the first game, and therefore has the large dimension properties described in the introduction.

\begin{definition}
\label{definitionHPgame}
Given $0 < \beta < 1/3$, Alice and Bob play the \emph{$\beta$-hyperplane absolute game} as follows:
\begin{itemize}
\item[1.] Bob begins by choosing a ball $B_0 = B(z_0,r_0)\subset \R^2$.
\item[2.] On Alice's $n$th turn, she chooses a set of the form $L_n^{(\w r_n)}$ with $L_n\in\scrL$, $0 < \w r_n \leq \beta r_n$, where $r_n$ is the radius of Bob's $n$th move $B_n = B(z_n,r_n)$. We say that Alice \emph{deletes} her choice $L_n^{(\w r_n)}$.
\item[3.] On Bob's $(n + 1)$st turn, he chooses a ball $B_{n + 1} = B(z_{n + 1},r_{n + 1})$ satisfying
\begin{equation}
\label{bobrulesabsolute}
r_{n + 1}\geq\beta r_n \text{ and } B_{n + 1} \subset B_n \butnot L_n^{(\w r_n)},
\end{equation}
where $B_n = B(z_n,r_n)$ was his $n$th move, and $L_n^{(\w r_n)}$ was Alice's $n$th move.
\item[4.] If $r_n\not\to 0$, then Alice wins by default. Otherwise, the balls $(B_n)_1^\infty$ intersect at a unique point which we call the \emph{outcome} of the game.
\end{itemize}
If Alice has a strategy guaranteeing that the outcome lies in a set $S$ (or that she wins by default), then the set $S$ is called \emph{$\beta$-hyperplane absolute winning}. If a set $S$ is $\beta$-hyperplane absolute winning for all $0 < \beta < 1/3$ then it is called \emph{hyperplane absolute winning}.

By contrast, given $\beta,c > 0$, Alice and Bob play the \emph{$(\beta,c)$-hyperplane potential game} as follows:
\begin{itemize}
\item[1.] Bob begins by choosing a ball $B(\xx_0,r_0)\subset \R^2$.
\item[2.] For each $n$, after Bob makes his $n$th move $B_n = B(\xx_n,r_n)$, Alice will make her $n$th move. She does this by choosing a countable collection of sets of the form $L_{i,n}^{(r_{i,n})}$, with $L_{i,n}\in\scrL$ and $r_{i,n} > 0$, satisfying
\begin{equation}
\label{alicerules}
\sum_i r_{i,n}^c \leq (\beta r_n)^c.
\end{equation}
\item[3.] After Alice makes her $n$th move, Bob will make his $(n + 1)$st move by choosing a ball $B_{n + 1} = B(\xx_{n + 1},r_{n + 1})$ satisfying
\begin{equation}
\label{bobrules}
r_{n + 1}\geq\beta r_n \text{ and } B_{n + 1} \subset B_n,
\end{equation}
where $B_n = B(\xx_n,r_n)$ was his $n$th move.
\item[4.] If $r_n\not\to 0$, then Alice wins by default. Otherwise, the balls $(B_n)_1^\infty$ intersect at a unique point which we call the \emph{outcome} of the game. If the outcome is an element of any of the sets $L_{i,n}^{(r_{i,n})}$ which Alice chose during the course of the game, she wins by default.
\end{itemize}
If Alice has a strategy guaranteeing that the outcome lies in a set $S$ (or that she wins by default), then the set $S$ is called \emph{$(\beta,c)$-hyperplane potential winning}. If a set is $(\beta,c)$-hyperplane potential winning for all $\beta,c > 0$, then it is \emph{hyperplane potential winning}.
\end{definition}

The following lemma is a special case of the main result of \cite[Appendix C]{FSU4}:

\begin{lemma}[{\cite[Theorem C.8]{FSU4}}]
\label{lemmaFSU}
A set is hyperplane potential winning if and only if it is hyperplane absolute winning.
\end{lemma}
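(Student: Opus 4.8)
The plan is to prove both implications by converting a winning strategy for one game into a winning strategy for the other. Two elementary observations organize the argument. First, both games use the same rule $r_{n+1}\geq\beta r_n$ for Bob and produce a single outcome point $\{\omega\}=\bigcap_n B_n$ (or a default win), and if a nested sequence of balls shrinking to $\omega$ all meet a fixed \emph{closed} set $F$ then $\omega\in F$; since each thickening $L^{(\gamma)}$ is closed, this lets Alice treat ``Bob ignores a deletion'' in the potential game as ``the outcome is trapped in that deletion, so Alice wins by default.'' Second, the potential game is monotone in $c$: because every $\rho_i\leq\beta r$, a family legal for exponent $c_1$ satisfies $\sum_i\rho_i^{c_2}\leq\sum_i\rho_i^{c_1}\leq(\beta r)^{c_1}$ and so is legal for every $c_2\geq c_1$. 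Thus Alice's power only grows with $c$, which clarifies that hyperplane potential winning is really a small-$c$ condition and that the countable-deletion budget is only mildly stronger than a single deletion.

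For the implication absolute $\Rightarrow$ potential, fix $\beta,c$ and let Alice run an absolute winning strategy $\sigma$ for the parameter $\beta_A=\min(\beta,1/4)<1/3$ inside the $(\beta,c)$-potential game, each turn deleting the single thickening $L_n^{(\w r_n)}$ that $\sigma$ prescribes; this is legal since $(\w r_n)^c\leq(\beta_A r_n)^c\leq(\beta r_n)^c$. If the outcome ever lands in one of these thickenings Alice wins by default; otherwise $\omega$ avoids every (closed) thickening, and I would exhibit a genuine legal absolute play shadowing Bob's moves and sharing the outcome $\omega$, so that $\sigma$ being winning forces $\omega\in S$. The point requiring care is that $\omega$ may sit just outside a deleted thickening, so that the radius-$\beta_A r_n$ virtual ball about $\omega$ demanded by the absolute rules would re-enter it; this is resolved by passing to closures together with a short time-rescaling, and is routine once set up.

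The substantive direction is potential $\Rightarrow$ absolute, where Alice is handed a potential strategy and must realize its countable batches $\{L_i^{(\rho_i)}\}$ using only one deletion per absolute turn. Here the asset is that Bob is now \emph{forced} to avoid each deleted thickening, and by nesting his ball avoids the union of all past deletions. The plan is to delete the thickenings of a batch largest-first, so that over a block of turns Bob's ball is carved down until it misses the whole batch region, i.e. Bob is driven to ``comply'' with the corresponding potential move, after which Alice advances the virtual potential game. The potential inequality is what makes the counting tractable: at most $(\beta r/\theta)^c$ thickenings of a batch have radius $\geq\theta$, and the rule $r_{n+1}\geq\beta_A r_n$ controls how quickly the window of turns usable for deleting scale-$\theta$ thickenings closes.

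The main obstacle is precisely this reconciliation. Against a Bob contracting at the maximal allowed rate only about $\log(1/\theta)/\log(1/\beta_A)$ turns are available before the scale-$\theta$ window shuts, whereas a single batch may contain on the order of $(1/\theta)^c$ thickenings at that scale, so Alice cannot delete an entire batch thickening-by-thickening and a naive faithful simulation is impossible. The resolution must exploit the forced avoidance more cleverly: after Alice removes the boundedly-many thickenings of the \emph{current} scale (their number bounded by the potential inequality), Bob is already pushed off the remainder of the batch, and the finer thickenings are deferred until his contraction brings them to the current scale. Turning this into a correct strategy---choosing $\beta$ and $c$ against the target $\beta_A<1/3$, and verifying that deferred thickenings are recaptured so that the outcome can never slip into an undeleted one---is the real work; the remaining steps, namely the time-rescaling aligning the turn structures of the two games, are bookkeeping.
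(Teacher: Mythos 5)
You have correctly isolated both the substantive direction (potential winning implies absolute winning, the only direction the paper proves or uses) and the central obstacle: a single batch that is legal for the potential game may contain on the order of $\theta^{-c}$ thickenings of radius about $\theta$, while Bob's contraction rate gives Alice only about $\log(1/\theta)$ turns before that scale becomes irrelevant, so a one-thickening-per-turn simulation cannot work. But your proposed resolution does not close this gap. The claim that only ``boundedly-many'' thickenings live at the current scale is false: the inequality $\sum_i r_{i,n}^c \leq (\beta r_n)^c$ bounds the total \emph{mass}, not the \emph{count}, and accumulating over all past batches the number of thickenings with radius comparable to $r_n$ grows like $r_n^{-c}$, which is unbounded. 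Moreover, ``deferring'' the finer thickenings leaves unaddressed the essential requirement that \emph{every} thickening be eventually separated from Bob's ball; a deferred thickening sitting near the outcome might never get its turn, and nothing in your scheme rules this out.

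The paper's argument (a sketch of this direction only, citing \cite[Appendix C]{FSU4}) uses a genuinely different mechanism that you would need to supply: Alice never attempts to delete individual thickenings at all. She maintains the potential function $\phi(B_n) = \sum_{m\leq n}\sum_{i:\, L_{i,m}^{(r_{i,m})}\cap B_n\neq\emptyset} r_{i,m}^c$ and on each turn deletes the single neighborhood $L_n^{(\beta r_n)}$ that captures the maximal total mass of thickenings meeting $B_n$ and contained in it. One then proves by induction that for $\w\beta$ and $c$ chosen small enough there is $\epsilon>0$ with $\phi(B_n)\leq(\epsilon r_n)^c$ for all $n$: the greedy choice removes a definite fraction of the mass, and the new mass added per turn is controlled by \eqref{alicerules}. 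Since $\phi(B_n)\to 0$ when $r_n\to 0$, each fixed thickening $L_{i,m}^{(r_{i,m})}$ satisfies $r_{i,m}^c > \phi(B_n)$ for large $n$, hence is eventually disjoint from $B_n$, so the outcome avoids all of Alice's potential-game deletions and must lie in $S$. This aggregate, mass-based argument is the missing idea; it sidesteps the turn-counting problem entirely because no thickening is ever required to be deleted at any particular time. (Your converse direction is the easier one and is not treated in the paper, but note that even there Bob in the potential game is not obliged to avoid Alice's deletion, so the ``shadowing'' legal absolute play you invoke needs an actual construction rather than a time-rescaling.)
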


\begin{proof}[Sketch of the proof]
We sketch only the forward direction, as it is the one which we use. Suppose that $S\subset\R^d$ is hyperplane potential winning. Let $\beta > 0$. Fix $\w\beta,c > 0$ small to be determined, and consider a strategy of Alice which is winning for the $(\w\beta,c)$-hyperplane potential game. Each time Bob makes a move $B_n = B(z_n,r_n)$, Alice chooses a collection of sets $\{L_{i,n}^{(r_{i,n})}\}_{i = 1}^{N_n}$ (with $N_n\in\N\cup\{\infty\}$) satisfying \eqref{alicerules}. Alice's corresponding strategy in the $\beta$-hyperplane absolute game will be to choose her set $L_n^{(\beta r_n)}\subset \R^2$ so as to maximize
\begin{equation}
\label{alicemaximizes}
\phi(B_n;L_n^{(\beta r_n)}) := \sum_{m = 0}^n \sum_{\substack{i \\ L_{i,m}^{(r_{i,m})}\subset L_n^{(\beta r_n)} \\ L_{i,m}^{(r_{i,m})}\cap B_n\neq\smallemptyset}} r_{i,m}^c.
\end{equation}
Suppose that Alice plays according to this strategy, and let $(B_n)_1^\infty$ be the sequence of Bob's moves. For each $n\in\N$ let
\[
\phi(B_n) = \sum_{m = 0}^n \sum_{\substack{i \\ L_{i,m}^{(r_{i,m})}\cap B_n\neq\smallemptyset}} r_{i,m}^c.
\]
One demonstrates by induction on $n$ (see \cite[Appendix C]{FSU4} for details) that if $\w\beta$ and $c$ are chosen sufficiently small, then there exists $\epsilon > 0$ such that for all $n\in\N$,
\begin{equation}
\label{inductivehypothesis}
\phi(B_n) \leq (\epsilon r_n)^c.
\end{equation}
Intuitively, the reason for this is that Alice is ``deleting the regions with high $\phi$-value'', and is therefore minimizing the $\phi$-value of Bob's balls. Thus she is forcing $\phi(B_n)$ to be as small as possible.

Now suppose that $r_n\to 0$; otherwise Alice wins the $\beta$-hyperplane absolute game by default. Then \eqref{inductivehypothesis} implies that $\phi(B_n)\to 0$. In particular, for each $(i,m)$, $r_{i,m}^c > \phi(B_n)$ for all $n$ sufficiently large which implies $L_{i,m}^{(r_{i,m})}\cap B_n = \emptyset$. Thus the outcome of the game does not lie in $L_{i,m}^{(r_{i,m})}$ for any $(i,m)$, so Alice does not win the $(\w\beta,c)$-hyperplane potential game by default. Thus if she wins, then she must win by having the outcome lie in $S$. Since the outcome is the same for the $(\w\beta,c)$-hyperplane potential game and the $\beta$-hyperplane absolute game, this implies that she also wins the $\beta$-hyperplane absolute game.
\end{proof}

\begin{remark}
\label{remarkexplicitstrategy}
The proof of Lemma \ref{lemmaFSU} shows that if Alice has an explicit strategy to win the hyperplane potential game, then she also has an explicit strategy to win the hyperplane absolute game. Here by ``explicit'' we mean roughly that Alice can calculate what her next move should be in a finite amount of time, based on the input of Bob's previous move. To see that Alice's strategy in the above proof can be made explicit, note that Alice does not actually have to maximize \eqref{alicemaximizes} precisely; for example, if she instead makes a choice for which the value of \eqref{alicemaximizes} is at least $1/2$ of the maximum value, then the proof will still go through with minor modifications. This margin of error allows her to restrict her attention to a finite set of potential moves; moreover, for each potential move she can truncate the series \eqref{alicemaximizes} at an appropriate point to make it a finite series. Thus she can compute an ``approximate maximum'' explicitly.
\end{remark}

\section{Proof of Theorem \ref{maintheorem}}

In this section we prove Theorem \ref{maintheorem}, which states that $\Bad(s,t)$ is hyperplane absolute winning.

\begin{proof}[Proof of Theorem \ref{maintheorem}]
By Lemma \ref{lemmaFSU}, it suffices to show that $\Bad(s,t)$ is hyperplane potential winning. Fix $\beta,c > 0$, and we will show that it is $(\beta,c)$-hyperplane potential winning. Let $B(\xx_0,r_0)\subset\R^2$ be Bob's first move. Fix $R \geq \beta^{-1}$ to be determined (depending on $\beta$ and $c$), and let $\ell = 2r_0$. Let $\epsilon > 0$ and the partition \eqref{partition} be as in Lemma \ref{lemmaan}. Alice's strategy will be defined by infinitely many ``triggers'' as follows: For each $m\geq 0$, Alice will wait until Bob chooses a ball $B_j = B(\xx_j,r_j)$ that satisfies $r_j \leq R^{-m}r_0/2$. The first $j$ for which this inequality holds will be denoted $j_m$, with $j_m = \infty$ if it never holds. We observe that
\begin{itemize}
\item[(i)] $j_m \geq 1$ for all $m\geq 0$, since $r_0 > R^{-m}r_0/2$, and
\item[(ii)] $j_{m + 1} \geq j_m + 1$, since $r_{j_m} \geq \beta r_{j_m - 1} > \beta R^{-m}r_0/2 \geq R^{-(m + 1)}r_0/2$ (using the fact that $R\geq \beta^{-1}$).
\end{itemize}
Fix $m\geq 0$, and let $j = j_m$. Let $\w B_j = B(\xx_j,R^{-m}r_0)$. On Alice's $j$th turn, her strategy will be as follows:
\begin{itemize}
\item[(1)] If $\w B_j\notin\BB_m$, then she will do nothing.
\item[(2)] If $\w B_j\in\BB_m$, then for each $k\geq 1$ and $\delta\in\{1,2\}$ she will apply Lemma \ref{lemmaan} to get a line $L_{k,\delta} = L_{m + k,k,\delta}(\w B_j) \in \scrL$, and she will delete the hyperplane-neighborhood $L_{k,\delta}^{(3R^{-(m + k)}r_0)}$.
\end{itemize}
The legality of this action is guaranteed by (ii), which shows that Alice is not deleting multiple collections on the same turn, together with the inequality
\begin{align*}
\sum_{\delta = 1}^2 \sum_{k = 1}^\infty \left(3R^{-(m + k)}r_0\right)^c
&= (3 R^{-m}r_0)^c \; 2\sum_{k = 1}^\infty R^{-ck}\\
&\leq \left(\beta^2 R^{-m}r_0/2\right)^c &\text{(for $R$ chosen large enough)}\\
&< \left(\beta r_{j_m}\right)^c. \since{$r_{j_m} \geq \beta r_{j_m - 1} > \beta R^{-m}r_0/2$}
\end{align*}
To complete the proof, we must show that the strategy described above guarantees a win for Alice. By contradiction, suppose that Bob can play in a way so that Alice loses. By definition, this means that the radii of Bob's balls tend to zero, and that their intersection point $\xx\in\R^2$ is not in $\Bad(s,t)$ nor in any of the hyperplane-neighborhoods which Alice deleted in the course of the game. In particular, the radii tending to zero means that each of the triggers happens eventually, i.e. $j_m < \infty$ for all $m\geq 0$.

\begin{claim}
\label{claiminductive}
For all $m\geq 0$, $\w B_{j_m} := B(\xx_{j_m},R^{-m}r_0)\in \BB_m$.
\end{claim}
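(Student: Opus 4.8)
The plan is to prove Claim \ref{claiminductive} by strong induction on $m$, working inside the standing assumption of the proof by contradiction: Bob's radii tend to $0$, so every trigger fires ($j_m < \infty$), and the outcome $\xx = \bigcap_n B_n$ lies in none of the hyperplane-neighborhoods Alice deleted. The base case $m = 0$ is immediate, since the inner union defining $\PP_0$ is empty, so $\PP_0 = \emptyset$ and the defining condition \eqref{Bhypothesis} for $\BB_0$ is vacuous; as $\w B_{j_0}$ has radius $R^{-0}\ell/2 = r_0$, it lies in $\BB_0$.

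For the inductive step, I would assume $\w B_{j_{m'}} \in \BB_{m'}$ for every $m' \leq m$ and fix $P \in \PP_{m'}$ with $m' \leq m+1$, the goal being $\Delta_\epsilon(P) \cap \w B_{j_{m+1}} = \emptyset$. The preliminary observation, used repeatedly, is a geometric containment: since Bob's balls are nested and $r_{j_{m''}} \leq R^{-m''}r_0/2$, for any $m'' \leq m$ one has $|\xx_{j_{m+1}} - \xx_{j_{m''}}| \leq R^{-m''}r_0/2$, whence (taking $R \geq 2$) $\w B_{j_{m+1}} \subset \w B_{j_{m''}}$. In the case $m' \leq m$ this containment with $m'' = m$, together with $\w B_{j_m}\in\BB_m$ from the inductive hypothesis, gives $\Delta_\epsilon(P)\cap\w B_{j_{m+1}}\subset\Delta_\epsilon(P)\cap\w B_{j_m}=\emptyset$ at once.

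The substantive case is $m' = m+1$, where I write $P \in \PP_{m+1,k}^{(\delta)}$ for some $1 \leq k \leq m+1$ and $\delta \in \{1,2\}$ and set $m'' = m+1-k \in \{0,\dots,m\}$. The bookkeeping point is that the level-$(m+1)$ piece indexed by $k$ is precisely the one Alice addresses at the earlier trigger $m''$ (where $n = m''+k = m+1$): by the inductive hypothesis $\w B_{j_{m''}}\in\BB_{m''}=\BB_{(m+1)-k}$, so Alice was in case (2) at that trigger and deleted $L^{(3R^{-(m+1)}r_0)}$ for $L = L_{m+1,k,\delta}(\w B_{j_{m''}})$ furnished by Lemma \ref{lemmaan}. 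Suppose $\Delta_\epsilon(P)\cap\w B_{j_{m+1}}\neq\emptyset$; since $\w B_{j_{m+1}}\subset\w B_{j_{m''}}$, also $\Delta_\epsilon(P)\cap\w B_{j_{m''}}\neq\emptyset$, so \eqref{an} gives $\Delta_\epsilon(P)\subset L^{(\frac{1}{3}\ell R^{-(m+1)})} = L^{(\frac{2}{3} r_0 R^{-(m+1)})}$. As $\xx\in B_{j_{m+1}}\subset\w B_{j_{m+1}}$ and this ball meets $\Delta_\epsilon(P)$, the triangle inequality yields $\dist(\xx,L)\leq 2R^{-(m+1)}r_0 + \tfrac{2}{3} R^{-(m+1)}r_0 < 3R^{-(m+1)}r_0$, so $\xx$ lies in the deleted region $L^{(3R^{-(m+1)}r_0)}$, contradicting the standing assumption. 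Hence $\Delta_\epsilon(P)\cap\w B_{j_{m+1}}=\emptyset$, which (together with the correct radius $R^{-(m+1)}\ell/2$ of $\w B_{j_{m+1}}$) completes the induction.

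I expect the main obstacle to be conceptual rather than computational: recognizing that avoiding the level-$(m+1)$ rationals requires invoking deletions made at \emph{all} the earlier triggers $m'' = m+1-k$, which is exactly what forces the induction into its strong form and makes the hypothesis at every prior level indispensable. The remaining care is only in the constants, namely fixing $R \geq 2$ so the smaller balls nest inside the larger ones and verifying that the generous thickening $3R^{-(m+1)}r_0$ Alice deletes dominates the bound $\tfrac{8}{3} R^{-(m+1)}r_0$ produced by the triangle inequality.
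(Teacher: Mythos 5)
Your proof is correct and follows essentially the same strategy as the paper's: strong induction, using the nesting $\w B_{j_{m+1}}\subset\w B_{j_{m''}}$ to invoke the deletion $L_{m+1,k,\delta}(\w B_{j_{m''}})^{(3R^{-(m+1)}r_0)}$ made at the earlier trigger $m''=m+1-k$, and the same $\tfrac23+2\le 3$ thickening estimate to conclude $\xx$ would lie in a deleted region. The only cosmetic difference is that you split off the case $m'\le m$ (handled directly by nesting and the inductive hypothesis) where the paper treats all $M'\le M$ uniformly via the deletion argument; both are valid.
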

\begin{subproof}
We proceed by strong induction and contradiction. Suppose the claim holds for all $0\leq m < M$, but does not hold for $M$. Then there exist $M'\leq M$ and $P\in\PP_{M'}$ such that $\Delta_\epsilon(P)\cap \w B_J \neq\emptyset$, where $J = j_M$. Write $P\in\PP_{M',k}^{(\delta)}$ for some $1\leq k\leq M'$ and $\delta\in\{1,2\}$. Let $m = M' - k < M$, and let $j = j_m$. We apply the induction hypothesis to see that $\w B_j\in \BB_m$. Thus on Alice's $j$th turn, she must have deleted the hyperplane-neighborhood $A := L_{k,\delta}^{(3R^{-(m + k)}r_0)}$, where $L_{k,\delta} = L_{m + k,k,\delta}(\w B_j)$ is as in Lemma \ref{lemmaan}.

On the other hand, since $J\geq j$, we have $B_J \subset B_j$; thus $\dist(\xx_j,\xx_J) \leq r_j \leq R^{-m}r_0/2$ and so $\w B_J \subset \w B_j$. Combining this with the contradiction hypothesis gives $\Delta_\epsilon(P)\cap \w B_j\neq\emptyset$. So by the definition of $L_{k,\delta} = L_{m + k,k,\delta}(\w B_j)$ (cf. \eqref{an}), we have
\[
\Delta_\epsilon(P) \subset L_{k,\delta}^{\left(\frac 23 R^{-(m + k)}r_0\right)}.
\]
Since $\frac 23 R^{-(m + k)}r_0 + 2 R^{-M}r_0 \leq 3 R^{-(m + k)}r_0$, this implies $(\Delta_\epsilon(P))^{(2 R^{-M}r_0)} \subset A$. In particular, since $\Delta_\epsilon(P)\cap \w B_J \neq\emptyset$ we have
\[
\xx\in B_J \subset \w B_J \subset (\Delta_\epsilon(P))^{\left(2 R^{-M}r_0\right)} \subset A.
\]
This demonstrates that Alice won by default, contradicting our hypothesis.
\end{subproof}

Now for all $P\in\Q^2$, we have $P\in\PP_m$ for some $m\geq 1$. Let $j = j_m$. Applying Claim \ref{claiminductive}, we see that $\Delta_\epsilon(P)\cap \w B_j = \emptyset$. But $\xx\in B_j \subset \w B_j$, so $\xx\notin \Delta_\epsilon(P)$. By \eqref{Badstformula}, this means $\xx\in\Bad(s,t)$. So Alice won, contradicting our hypothesis. This completes the proof.
\end{proof}

\ignore{
To devise a strategy for Alice we have to guarantee condition (\ref{eq:Jinpeng2D}), in which $\delta$ do not appear. This is why Alice will play simultanuously with $\delta$ equals $1$ and $2$. Let Bob choose a ball $B=B\left(\xx_{1},r_{1}\right)$ and a $\beta>0$. Use Lemma \ref{lemmaan} with $R=\frac{1}{\beta^2}$ and $r=r_{1}$ to get $\epsilon > 0$ and the partition $\PP$. Define Alice's strategy by induction. Assume Bob chose a ball $B_{2n-1}$ such that \eqref{eq:Jinpeng2D} holds with $k = 1$. Then for every $k$, let Alice choose
\[
L_{2n-1,k}=\begin{cases}
L_{n,\frac{k}{2},1}^{\left(R^{-\left(n-1+\frac{k}{2}\right)}r\right)} & k\textrm{ is even }\\
\emptyset & \textrm{otherwise}
\end{cases}
\]
where $L_{n,k,1}$ is the unique line in $L(\PP(n-1+k,k,1,B_{2n-1}))$, and (no matter what was Bob's choice for $B_{\ensuremath{2n}}$)
\[
L_{2n,k}=\begin{cases}
\emptyset & k\textrm{ is even }\\
L_{n,\frac{k+1}{2},2}^{\left(R^{-\left(n+\frac{k-1}{2}\right)}r\right)} & \textrm{otherwise}
\end{cases}
\]
where $L_{n,k,2}$ is the unique line in $L(\PP(n-1+k,k,2,B_{2n-1}))$. To justify the induction hypothesis, we have to check that the next move of Bob, $B_{2n+1}$, has to satisfy $\Delta_\epsilon(P)\cap B_{2n+1}=\emptyset$ for any $P\in\PP(n_0)$ with $n_0\leq n$. Since $B_{2n+1}\subset B_{2n-1}$ it is enough to check it for $n_0=n$. Now, for any $P\in\PP(n)$ that satisfies $\Delta_\epsilon(P)\cap B_{2n}\neq\emptyset$ there exist $\delta\in\left\{ 1,2\right\} $ and $k\geq1$, such that $P\in\PP(n,k,\delta,B_{2n})$, therefore by Lemma \ref{lemmaan} it satisfies $\Delta_\epsilon(P)\subset L(P)^{\left(R^{-n}r\right)}=L_{n+1-k,k,\delta}^{\left(R^{-n}r\right)}$. Indeed we have by definition of the game,
\[
B_{2n+1}\subset B_{2n}\setminus\bigcup_{k=1}^{2n}L_{2n+1-k,k}=B_{2n}\setminus\bigcup_{\delta=1}^2\bigcup_{k=1}^{n}L_{n+1-k,k,\delta}^{\left(R^{-n}r\right)}
\]
}% end ignore

\ignore{

The reader is advised to look in \cite[Appendix C]{FSU4} for a more general version of this game which is called the potential game. Actually, it is proved there that being a winning set for the potential game with respect to hyperplanes is equivalent to being HAW. In order to be self contained, we repeat the argument of the proof that appears there for the special case we are interested in this paper.

\begin{theorem}
A winning set for the game described in Definition \ref{definitionHPgame} is HAW on $\R^2$.\end{theorem}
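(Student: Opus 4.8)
The statement is, in effect, the forward direction of Lemma \ref{lemmaFSU} written out for $\R^2$. Here a \emph{winning set for the game of Definition \ref{definitionHPgame}} means (as the surrounding discussion makes clear) a set $S$ that is $(\beta,c)$-hyperplane potential winning for all $\beta,c>0$; to prove that such an $S$ is HAW I must produce, for each $0<\beta<1/3$, a strategy for Alice in the $\beta$-hyperplane absolute game forcing the outcome into $S$ (or a win by default). The plan is to reproduce the self-contained simulation argument of \cite[Appendix C]{FSU4} in dimension two, so I would argue directly rather than cite the general Lemma \ref{lemmaFSU}.

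First I fix $\beta\in(0,1/3)$ and choose auxiliary parameters $\w\beta,c>0$, small and to be calibrated at the end. Since $S$ is $(\w\beta,c)$-hyperplane potential winning, Alice has a winning strategy $\Sigma$ for that game, and I build her strategy in the $\beta$-absolute game by running $\Sigma$ in the background. Each time Bob plays $B_n=B(\xx_n,r_n)$, the strategy $\Sigma$ returns a countable family $\{L_{i,n}^{(r_{i,n})}\}_i$ of thickened lines obeying the budget \eqref{alicerules}. In the absolute game Alice may delete only one thickened line, so she deletes $L_n^{(\beta r_n)}$, choosing the line $L_n\in\scrL$ that maximizes the captured potential $\phi(B_n;L_n^{(\beta r_n)})$ of \eqref{alicemaximizes}, i.e. the total $r_{i,m}^c$-mass of previously offered neighborhoods that lie inside $L_n^{(\beta r_n)}$ and still meet $B_n$. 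Because Bob's sequence of balls, and hence the outcome point, is identical in the real absolute game and in the simulated potential game, it suffices to control how much potential mass survives near the outcome.

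The crux is the inductive estimate \eqref{inductivehypothesis}: writing $\phi(B_n)=\sum_{(i,m):\,L_{i,m}^{(r_{i,m})}\cap B_n\neq\emptyset} r_{i,m}^c$, one shows by induction on $n$ that for $\w\beta,c$ small there is $\epsilon>0$ with $\phi(B_n)\le(\epsilon r_n)^c$. The passage from $B_n$ to $B_{n+1}$ is where the real work lies, and it is the main obstacle. I would split the neighborhoods meeting $B_{n+1}$ into ``thick'' ones, whose radius is comparable to $r_n$ and whose total mass is bounded directly by the budget \eqref{alicerules} together with $r_{n+1}\ge\beta r_n$, and ``thin'' ones. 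Here the two-dimensional setting is convenient: a thin thickened line crossing the ball $B_{n+1}$ is nearly flat across it, so the thin neighborhoods carrying the most mass all lie in a single thickened line $L^{(\beta r_n)}$; since Alice chose $L_n$ to maximize captured potential, her deletion captures at least this much, and Bob's requirement $B_{n+1}\subset B_n\butnot L_n^{(\beta r_n)}$ forces him to discard it. Thus the surviving mass contracts, and calibrating $\w\beta$ and $c$ small enough closes the recursion with the claimed $\epsilon$.

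Finally I conclude by the same reasoning as in the proof of Lemma \ref{lemmaFSU}. If $r_n\not\to 0$ then Alice wins the $\beta$-absolute game by default. Otherwise $r_n\to 0$, so \eqref{inductivehypothesis} gives $\phi(B_n)\to 0$; for each fixed pair $(i,m)$ this forces $r_{i,m}^c>\phi(B_n)$ eventually, whence $L_{i,m}^{(r_{i,m})}\cap B_n=\emptyset$ for large $n$ and the outcome lies in no neighborhood offered by $\Sigma$. Hence Alice does not win the simulated potential game by default, so her winning strategy $\Sigma$ must place the outcome in $S$. As the outcome coincides with that of the $\beta$-absolute game, Alice wins there with outcome in $S$. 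Since $0<\beta<1/3$ was arbitrary, $S$ is HAW on $\R^2$.
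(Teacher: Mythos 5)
Your overall route is the same as the paper's: the statement is precisely the forward direction of Lemma \ref{lemmaFSU}, and the paper proves it by exactly the simulation-plus-greedy-maximization scheme you describe — Alice runs her potential-game strategy in the background and, in the absolute game, deletes the neighborhood $L_n^{(\beta r_n)}$ maximizing the captured potential \eqref{alicemaximizes}; your concluding paragraph (if $r_n \to 0$ then $\phi(B_n) \to 0$, so no deleted neighborhood meets the outcome, so the potential-game strategy must have placed the outcome in $S$) is verbatim the paper's. The difference is your treatment of the inductive estimate \eqref{inductivehypothesis}: the paper does not prove it, explicitly deferring it to \cite[Appendix C]{FSU4}, whereas you attempt to prove it, and the argument you give for it is flawed.

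The flaw is the assertion that ``the thin neighborhoods carrying the most mass all lie in a single thickened line $L^{(\beta r_n)}$,'' so that Alice's one deletion captures essentially all surviving thin mass and the recursion closes in a single step. This is false. Note that the capture condition in \eqref{alicemaximizes} is containment of the full strip, $L_{i,m}^{(r_{i,m})} \subset L_n^{(\beta r_n)}$, which can only hold when the two lines are parallel; so take two thin neighborhoods of equal, large mass whose lines cross transversally at the center of $B_n$ (say a horizontal and a vertical strip of thickness $\rho \ll \beta r_n$). No legal deletion of Alice contains both, so her maximizing move captures at most half of this mass, and the other half survives into $B_{n+1}$ — there is no one-step multiplicative contraction. (Under the charitable reading that only the single heaviest neighborhood is captured, the claim is true but the conclusion ``the surviving mass contracts'' no longer follows.) The actual argument of \cite[Theorem C.8]{FSU4} is genuinely more involved: it exploits the rule $r_{n+1} \geq \beta r_n$, which forces Bob's radii to shrink slowly, together with the smallness of $\w\beta$ and $c$ (so that the allowed bound $(\epsilon r_n)^c$ decays only by the factor $\beta^c \approx 1$ per turn and the number of ``heavy'' neighborhoods is controlled by the budget \eqref{alicerules}), to let Alice remove the heavy neighborhoods one at a time over several turns before they can hurt her — an amortized argument, not a one-turn capture. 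So as written your proof has a genuine gap at precisely the step the paper declines to reprove; either reproduce the bookkeeping of \cite[Appendix C]{FSU4} honestly, or do what the paper does and quote it as Lemma \ref{lemmaFSU}.
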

\begin{proof}
TBD
\end{proof}
}% end ignore

\end{document}